\theoremstyle{plain}
\newtheorem{theorem}{Theorem}[section]
\newtheorem{corollary}{Corollary}[section]
\newtheorem{remark}{Remark}[section]
\newtheorem{lemma}{Lemma}[section]
\title[Upper and lower  bounds for eigenvalues ]{Upper and lower bounds for eigenvalues \\
 of the clamped plate problem*}
\author{Qing-Ming Cheng and  Guoxin Wei}
\address{Qing-Ming Cheng \\  Department of Mathematics, Graduate School  of Science and Engineering,
Saga University, Saga 840-8502,  Japan, cheng@ms.saga-u.ac.jp}
\address{Guoxin Wei \\  School of Mathematical Sciences, South China Normal University,
510631, Guangzhou,  China, weiguoxin@tsinghua.org.cn}
\begin{document}
\maketitle

\begin{abstract}
\noindent In this paper,  we study  estimates for eigenvalues of the clamped plate problem.
A sharp upper bound for eigenvalues  is given and
the lower bound for eigenvalues in  \cite{[CW]} is  improved.
\end{abstract}

\footnotetext{ 2001 \textit{ Mathematics Subject Classification}: 35P15.}

\footnotetext{{\it Key words and phrases}: The clamped plate problem, upper bounds for eigenvalues, lower bounds for eigenvalues.}

\footnotetext{* The first author was partly supported by a Grant-in-Aid for
Scientific Research from JSPS. The second author was partly supported by grant No. 11001087 of NSFC.}

\section {Introduction}
\noindent
A  membrane has its transverse vibration  governed by  equation
\begin{equation*}
      \Delta u =-\lambda u, \  \ {\rm in} \ \ \Omega
\end{equation*}
with the boundary condition
$$
u=0 , \ \ {\rm on}  \ \ \partial \Omega,
$$
where $\Delta$ is the Laplacian in  $\mathbf{R}^n$ and $\Omega$ is a bounded domain in
$\mathbf{R}^n$. It is classical that there is a countable sequence of eigenvalues
$$
0<\lambda_1<\lambda_2\leq\lambda_3\leq\cdots\rightarrow\infty,
$$
and a sequence of corresponding eigenfunctions $u_1, u_2, \cdots, u_k, \cdots$
such that
\begin{equation*}
      \Delta u_k =-\lambda_k u_k, \  \ {\rm in} \ \ \Omega.
\end{equation*}
The eigenfunctions form an orthonormal basis of $L^2(\Omega)$.

On the other hand,  the vibration of a stiff plate differs from that of a membrane
not only in the equation which governs its motion
but also in the way the plate is fastened to its boundary.
A plate spanning a domain $\Omega$ in  $\mathbf{R}^n$ has its transverse vibrations governed
by
\begin{equation}\label{eq:1.1}
  {\begin{cases}
     \Delta^2 u = \Gamma  u,& \ \ {\rm in} \ \ \Omega ,\\
     u=\dfrac{\partial u}{\partial \nu}=0 , & \ \ {\rm on}  \ \ \partial \Omega,
     \end{cases}}
  \end{equation}
where $\nu$ denotes the outward unit normal to the boundary $\partial \Omega$.
Namely, not only  is the rim of the plate firmly fastened to the boundary, but the plate
is clamped so that lateral motion can occur at the edge. One calls it  {\it a clamped plate problem}.
It is known that this problem has a real and discrete spectrum
$$
0<\Gamma_1\leq\Gamma_2\leq\cdots\leq\Gamma_k\leq\cdots\to
+\infty,$$ where each $\Gamma_i$ has finite multiplicity which is
repeated according to its multiplicity.

\noindent
For the eigenvalues of the clamped plate problem \eqref{eq:1.1},  Agmon  \cite{[Ag]} and Pleijel \cite{[P1]}
gave the following  asymptotic  formula,
$$
\Gamma_k\sim
\dfrac{16\pi^4}{\big(B_n\text{vol}(\Omega)\big)^{\frac{4}{n}}}k^{\frac{4}{n}},\
\ \  k\rightarrow\infty.
  $$
This implies that
\begin{equation}\label{eq:1.2}
\frac{1}{k}\sum_{j=1}^k\Gamma_j
\sim\frac{n}{n+4}\dfrac{16\pi^4}{\big(B_n\text{vol}(\Omega)\big)^{\frac{4}{n}}}k^{\frac{4}{n}},
\ \ k\rightarrow\infty,
\end{equation}
where $B_n$ denotes the volume of the unit ball in $\mathbf R^n$.
Furthermore, Levine and Protter \cite{[LP]} proved that the eigenvalues of
the clamped plate problem \eqref{eq:1.1} satisfy
$$\dfrac{1}{k}\sum_{j=1}^k\Gamma_j
\geq\frac{n}{n+4}\dfrac{16\pi^4}{\big(B_n\text{vol}(\Omega)\big)^{\frac{4}{n}}}k^{\frac{4}{n}}.
$$
The formula \eqref{eq:1.2} shows that the coefficient of $k^{\frac{4}{n}}$ is the best possible constant.
Thus, it will be interesting and very important to find  the second term on $k$ of the asymptotic expansion formula
of $\Gamma_k$.
The authors \cite{[CW]}  have made effort for this problem. We have improved  the result due to Levine and Protter \cite{[LP]}  by adding to its right hand side two terms of lower order in $k$:
\begin{equation}\label{eq:1.3}
\begin{aligned}
& \frac{1}{k}\sum_{j=1}^k\Gamma_j\geq
\frac{n}{n+4}\dfrac{16\pi^4}{\big(B\text{vol}(\Omega)\big)^{\frac{4}{n}}}k^{\frac{4}{n}}\\
 &+\left(\frac{n+2}{12n(n+4)}-\frac{1}{1152n^2(n+4)}
    \right)\frac{\text{vol}(\Omega)}{I(\Omega)}\frac{n}{n+2}
 \dfrac{4\pi^2}{\big(B\text{vol}(\Omega)\big)^{\frac{2}{n}}}k^{\frac{2}{n}}\\
 &+\left(\frac{1}{576n(n+4)}-\frac{1}{27648n^2(n+2)(n+4)}\right)
 \left(\frac{\text{vol}(\Omega)}{I(\Omega)}\right)^2,
\end{aligned}
\end{equation}
where
$$
I(\Omega)=\min\limits_{a\in \mathbf{R}^n}\int_\Omega|x-a|^2dx
$$
 is called {\it  the moment of inertia} of $\Omega$.
On the other hand, if one
can obtain an upper bound with optimal order of $k$ for eigenvalue $\Gamma_k$, then one can know the
exact  second term on $k$. From our knowledge, there is no any result on upper bounds for eigenvalue
$\Gamma_k$ with optimal order of $k$.  In \cite{CY2}, Cheng and Yang have established a recursion
formula in order to obtain  upper bounds for eigenvalues of the Dirichlet eigenvalue problem of the
Laplacian. Hence, if one can get a sharper universal inequality for eigenvalues of the clamped plate
problem, we can also derive an upper bound for eigenvalue $\Gamma_k$ by making use of the recursion
formula due to Cheng and Yang \cite{CY2}. On the investigation of universal inequalities for eigenvalues of the clamped plate problem,
Payne, P\'olya and Weinberger \cite{[PPW]} proved
\begin{equation*}
\Gamma_{k+1} - \Gamma_{k} \leq \frac{8(n+2)}{n^{2} k} \sum_{i=1}^{k} \Gamma_{i}.
\end{equation*}
 Chen and Qian \cite{CQ} and Hook \cite{H}, independently, extended
 the above inequality to
\begin{equation*}
\frac{n^{2} k^{2}}{8(n+2)} \leq
 \sum_{i=1}^{k} \frac{\Gamma_{i}^{\frac{1}{2}}}{\Gamma_{k+1}
 - \Gamma_{i}} \sum_{i=1}^{k} \Gamma_{i}^{\frac{1}{2}}.
\end{equation*}
Recently, answering a question of Ashbaugh \cite{Ash},  Cheng and Yang \cite{CY2} have proved the
following remarkable estimate:
\begin{equation*}
\sum_{i=1}^{k} (\Gamma_{k+1} - \Gamma_{i} ) \leq (\frac{8(n+2)}{n^{2}})^{\frac{1}{2}} \sum_{i=1}^{k} (\Gamma_{i} (\Gamma_{k+1} - \Gamma_{i} ))^{\frac{1}{2}}.
\end{equation*} \\
Furthermore, Wang and Xia \cite{WX} (cf. Cheng, Ichikawa and Mametsuka \cite{CIM1}, \cite{CIM2}) have proved
\begin{equation*}
\sum_{i=1}^k (\Gamma_{k+1}-\Gamma_{i})^2 \leq \displaystyle{\frac{8(n+2)}{n^2}}\sum_{i=1}^k(\Gamma_{k+1}-\Gamma_{i})\Gamma_{i}.
\end{equation*}
The first author has conjectured the following:

\vskip 2mm
\noindent
{\bf Conjecture.} Eigenvalue $\Gamma_j$'s  of  the clamped plate problem (1.1) satisfy
\begin{equation}\label{eq:1.4}
\sum_{j=1}^k (\Gamma_{k+1}-\Gamma_{j})^2 \leq \displaystyle{\frac{8}{n}}\sum_{j=1}^k(\Gamma_{k+1}-\Gamma_{j})\Gamma_{j}.
\end{equation}
If one can solve the above conjecture, then from the recursion formula of Cheng and Yang \cite{CY2},
we can derive an upper bound for the eigenvalue $\Gamma_k$ with the optimal order of $k$. But it
seems to be hard to solve this conjecture.

In this paper, we will try to use a fact that  eigenfunctions of the clamped plate problem \eqref{eq:1.1}
form an orthonormal basis of  the Sobolev Space $W_0^{2,2}(\Omega)$
to get an upper bound for eigenvalues of the clamped plate problem \eqref{eq:1.1}.
A similar fact for the Dirichlet eigenvalue problem of the Laplacian  is also
used by Li and Yau \cite{LY} and Kr\"oger \cite{K}.
Furthermore, we will give an improvement of  the inequality \eqref{eq:1.3}.

Let $\Omega$ be a bounded domain with a smooth boundary $\partial\Omega$ in
the  $n$-dimensional Euclidean space $\mathbf{R}^n$. Let $d(x)=\text{dist}(x, \partial\Omega)$ denote
the distance function from the point $x$ to the boundary $\partial \Omega$ of $\Omega$.
We define
$$
\Omega_r=\biggl\{x\in\Omega \  | \ \ d(x)<\dfrac1r\biggl\}.
$$

\begin{theorem}
Let $\Omega$ be a bounded domain with a smooth boundary $\partial \Omega$ in $\mathbf {R}^n$.
Then there exists a constant $r_0>0$ such that eigenvalues of the clamped plate problem {\rm \eqref{eq:1.1}} satisfy
\begin{equation}\label{eq:1.5}
\begin{aligned}
&\dfrac{1}{k}\sum_{j=1}^{k}\Gamma_j
\leq\dfrac{1+\dfrac{4(n+4)(n^2+2n+6)}{n+2}\dfrac{\text{vol}(\Omega_{r_0}) }{\text{vol}(\Omega)}}{\bigl(1-\dfrac{\text{vol}(\Omega_{r_0}) }{\text{vol}(\Omega)}\bigl)^{\frac{n+4}n}}\dfrac n{n+4}\dfrac{16\pi^4}{\bigl(B_n\text{vol}(\Omega)\bigl)^{\frac4n}}k^{\frac4n},\\
\end{aligned}
\end{equation}
for $k\geq \text{vol}(\Omega)r_0^n$.
\end{theorem}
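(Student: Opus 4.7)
The plan is to adapt the Kr\"oger--Li--Yau averaging argument for sums of Laplace eigenvalues to the clamped biharmonic setting. Since functions in $W_{0}^{2,2}(\Omega)$ must vanish to first order on $\partial\Omega$, plane waves are inadmissible as test functions, and I would therefore fix a cut-off $\eta\in C^{\infty}(\overline{\Omega})\cap W_{0}^{2,2}(\Omega)$ with $0\le\eta\le 1$ and $\eta\equiv 1$ on $\Omega\setminus\Omega_{r_{0}}$, and consider the trial family $\phi_{\xi}(x)=\eta(x)e^{i\xi\cdot x}\in W_{0}^{2,2}(\Omega)$ for $\xi\in\mathbf{R}^{n}$. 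Writing $c_{j}(\xi)=\langle\phi_{\xi},u_{j}\rangle_{L^{2}(\Omega)}$, Parseval in the $L^{2}$-orthonormal basis $\{u_{j}\}$ gives $\sum_{j}|c_{j}(\xi)|^{2}=\int_{\Omega}\eta^{2}\,dx=:V$ independently of $\xi$, while integrating $\int\Delta\phi_{\xi}\,\overline{\Delta u_{j}}\,dx$ by parts (legitimate because both $\phi_{\xi}$ and $u_{j}$ satisfy the clamped conditions) yields $\sum_{j}\Gamma_{j}|c_{j}(\xi)|^{2}=\int_{\Omega}|\Delta\phi_{\xi}|^{2}\,dx$.

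The spectral inequality $\sum_{j\ge k+1}\Gamma_{j}|c_{j}|^{2}\ge \Gamma_{k+1}(V-\sum_{j=1}^{k}|c_{j}|^{2})$ rearranges to $\sum_{j=1}^{k}(\Gamma_{k+1}-\Gamma_{j})|c_{j}(\xi)|^{2}\ge \Gamma_{k+1}V-\int_{\Omega}|\Delta\phi_{\xi}|^{2}\,dx$, and integrating this in $\xi$ over the ball $B_{R}\subset\mathbf{R}^{n}$ together with the Plancherel bound $\int_{\mathbf{R}^{n}}|c_{j}(\xi)|^{2}d\xi=(2\pi)^{n}\int_{\Omega}(\eta u_{j})^{2}dx\le (2\pi)^{n}$ yields
\[
(2\pi)^{n}\sum_{j=1}^{k}(\Gamma_{k+1}-\Gamma_{j})\ge \Gamma_{k+1}V B_{n}R^{n}-\int_{B_{R}}\!\int_{\Omega}|\Delta\phi_{\xi}|^{2}dx\,d\xi.
\]
I would then choose $R$ so that $VB_{n}R^{n}=(2\pi)^{n}k$, cancelling the $\Gamma_{k+1}$ coefficients on both sides and leaving a direct upper bound for $\sum_{j=1}^{k}\Gamma_{j}$. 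Expanding $\Delta\phi_{\xi}=(\Delta\eta-|\xi|^{2}\eta+2i\xi\cdot\nabla\eta)e^{i\xi\cdot x}$ and invoking the radial moments $\int_{B_{R}}|\xi|^{4}d\xi=\frac{n}{n+4}R^{4}B_{n}R^{n}$ and $\int_{B_{R}}(\xi\cdot\nabla\eta)^{2}d\xi=\frac{R^{2}}{n+2}B_{n}R^{n}|\nabla\eta|^{2}$ collapses the double integral to
\[
\frac{1}{k}\sum_{j=1}^{k}\Gamma_{j}\le \frac{nR^{4}}{n+4}+\frac{2R^{2}}{V}\int_{\Omega}|\nabla\eta|^{2}dx+\frac{1}{V}\int_{\Omega}(\Delta\eta)^{2}dx,
\]
with $R^{4}=16\pi^{4}(k/(VB_{n}))^{4/n}$.

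To conclude, I would use $V\ge (1-\alpha)\text{vol}(\Omega)$ where $\alpha=\text{vol}(\Omega_{r_{0}})/\text{vol}(\Omega)$, turning the leading $\frac{nR^{4}}{n+4}$ into the Weyl-type term with factor $(1-\alpha)^{-4/n}$. A concrete choice $\eta(x)=h(r_{0}d(x))$ with a one-dimensional profile $h$ satisfying $h(0)=h'(0)=0$ and $h(1)=1$ allows one to estimate $\int_{\Omega}|\nabla\eta|^{2}$ and $\int_{\Omega}(\Delta\eta)^{2}$ by explicit multiples of $r_{0}^{2}\text{vol}(\Omega_{r_{0}})$ and $r_{0}^{4}\text{vol}(\Omega_{r_{0}})$ respectively, the constants being determined by the moments $\int(h')^{2}$ and $\int(h'')^{2}$. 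The hypothesis $k\ge\text{vol}(\Omega)r_{0}^{n}$ then forces $R\gtrsim r_{0}$, so both remainders are of relative order $\alpha/(1-\alpha)$. The algebraic identity $[1+C\alpha/(1-\alpha)](1-\alpha)^{-4/n}=[1+(C-1)\alpha](1-\alpha)^{-(n+4)/n}$ packages the corrections into the claimed multiplicative factor. I expect the main obstacle to be the sharp bookkeeping that produces the specific numerator $\frac{4(n+4)(n^{2}+2n+6)}{n+2}$: this requires an optimal variational choice of $h$ compatible with the clamped constraints, and it is there that the polynomial $n^{2}+2n+6$ is expected to emerge from balancing the contributions of $\int(h')^{2}$ and $\int(h'')^{2}$.
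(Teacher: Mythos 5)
Your strategy is essentially the paper's: the Kr\"oger/Li--Yau averaging device, with a boundary cutoff to make $e^{i\xi\cdot x}$ admissible in $W_0^{2,2}(\Omega)$, followed by Rayleigh--Ritz, Plancherel, and a choice of the averaging radius that cancels the $\Gamma_{k+1}$ terms. Your algebra up to
\[
\frac1k\sum_{j=1}^k\Gamma_j\le\frac{n}{n+4}R^4+\frac{2R^2}{V}\int_\Omega|\nabla\eta|^2\,dx+\frac1V\int_\Omega(\Delta\eta)^2\,dx
\]
is correct. The gap, however, is not mere bookkeeping: you have not determined where the stated numerator comes from, and your guess about its origin is wrong.

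First, the cutoff. The paper does \emph{not} take any variationally optimal profile; it takes the plain quadratic $f_r(x)=r^2d(x)^2$ on $\Omega_r$ and $f_r\equiv1$ elsewhere, i.e.\ $h(t)=t^2$ in your notation. (Note that this $f_r$ has a gradient jump across $d=1/r$, so it is not literally in $W^{2,2}$; your insistence on a smooth $\eta$ is actually more scrupulous, but then you cannot reproduce the paper's constant exactly.) Second, your sketch never explains how $r_0$ is determined. The paper fixes it from the curvature of $\partial\Omega$ via the identity $\Delta d^2=2n-\sum_{j=1}^{n-1}\frac{2}{1-\kappa_j d}$: choosing $r_0>n\kappa$ (with $\kappa$ an upper bound for $|\kappa_j|$) gives $0<\Delta d^2<2n$ on $\Omega_r$ for $r\ge r_0$, hence $\int_{\Omega_r}(\Delta f_r)^2\le4n^2r^4\,\text{vol}(\Omega_r)$; this is exactly where the $4n^2$ inside the stated coefficient enters, and it is what the hypothesis ``there exists a constant $r_0$'' refers to. Third, the paper ties the cutoff scale and the averaging radius together as a \emph{single} parameter $r$, bounds $|\nabla f_r|^2\le4r^2$ and the cross term crudely by $6|z|^2\int|\nabla f_r|^2$, and only at the very end replaces $\text{vol}(\Omega_r)$ by $\text{vol}(\Omega_{r_0})$ using $r>r_0$; this yields $\frac{24n}{n+2}+4n^2=\frac{4n(n^2+2n+6)}{n+2}$ and, after factoring $\frac{n}{n+4}$, the stated $\frac{4(n+4)(n^2{+}2n{+}6)}{n+2}$. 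Your decoupled $(r_0,R)$ scheme with the exact moment $\int_{B_R}(\xi\cdot\nabla\eta)^2d\xi=\frac{R^{n+2}B_n}{n+2}|\nabla\eta|^2$ is cleaner and in fact gives a smaller cross-term constant, so it would prove a somewhat different inequality, not the one as stated. In short: right method, but the constant is not derived, its source is misidentified, and the curvature-based choice of $r_0$ (the substance of the ``there exists $r_0$'' in the theorem) is missing.
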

\begin{remark}
Since  $\text{vol}(\Omega_{r_0})\to 0$ when $r_0\to \infty$, we know that  the upper bound in the theorem 1.1
is sharp in the sense of the  asymptotic formula due to Agmon and Pleijel.
\end{remark}

\begin{corollary}
Let $\Omega$ be a bounded domain with a smooth boundary $\partial \Omega$ in $\mathbf {R}^n$.
If there exists a constant $c_0$ such that
$$
\text{vol}(\Omega_r)\leq c_0\text{vol}(\Omega)^{\frac{n-1}n}\dfrac1r
$$
for $r>\text{vol}(\Omega)^{\frac{-1}n}$,
then there exists a constant $r_0$ such that eigenvalues of the clamped plate problem {\rm \eqref{eq:1.1}} satisfy
\begin{equation}\label{eq:1.6}
\begin{aligned}
&\dfrac{1}{k}\sum_{j=1}^{k}\Gamma_j
\leq \dfrac n{n+4}\dfrac{16\pi^4}{\bigl(B_n\text{vol}(\Omega)\bigl)^{\frac4n}}\biggl( k^{\frac4n}
+c_0c(n) k^{\frac3n}\biggl),
\end{aligned}
\end{equation}
for $k=\text{vol}(\Omega)r_0^n>c_0^n$, where $c(n)$ is  a constant depended only on $n$.
\end{corollary}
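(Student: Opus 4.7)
The strategy is to apply Theorem 1.1 with the parameter $r_0$ tied to $k$, and then to expand the resulting coefficient to first order in $1/k^{1/n}$. Given $k$, I would set $r_0=(k/\text{vol}(\Omega))^{1/n}$, so that $k=\text{vol}(\Omega)r_0^{\,n}$ exactly and the hypothesis $k\ge\text{vol}(\Omega)r_0^{\,n}$ of Theorem 1.1 holds with equality. Since $k>c_0^{\,n}$ forces $r_0>c_0\,\text{vol}(\Omega)^{-1/n}\ge\text{vol}(\Omega)^{-1/n}$ (the latter inequality at worst after harmlessly enlarging $r_0$), the polynomial decay hypothesis of the corollary applies at $r=r_0$ and yields
$$\frac{\text{vol}(\Omega_{r_0})}{\text{vol}(\Omega)}\le\frac{c_0\,\text{vol}(\Omega)^{(n-1)/n}}{r_0\,\text{vol}(\Omega)}=\frac{c_0}{\text{vol}(\Omega)^{1/n}r_0}=\frac{c_0}{k^{1/n}}=:t\in(0,1).$$

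Since the right-hand side of \eqref{eq:1.5} is monotonically nondecreasing in the ratio $\text{vol}(\Omega_{r_0})/\text{vol}(\Omega)$ (the numerator grows linearly, the denominator shrinks), substituting the upper bound above and dividing by the leading factor $\frac{n}{n+4}\frac{16\pi^{4}}{(B_{n}\text{vol}(\Omega))^{4/n}}k^{4/n}$ reduces \eqref{eq:1.6} to the one-variable estimate
$$F(t):=\frac{1+A(n)t}{(1-t)^{(n+4)/n}}\le 1+c(n)t,\qquad A(n):=\frac{4(n+4)(n^{2}+2n+6)}{n+2},$$
because $t\,k^{4/n}=c_{0}\,k^{3/n}$. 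A Taylor expansion at $t=0$ gives $F(t)=1+\bigl(A(n)+\tfrac{n+4}{n}\bigr)t+O(t^{2})$, so on any closed subinterval $[0,t_{0}]\subset[0,1)$ the ratio $(F(t)-1)/t$ is bounded by a constant $c(n)$ depending only on $n$.

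The only real obstacle is keeping $t$ bounded away from the singularity of $(1-t)^{-(n+4)/n}$ at $t=1$: the bare condition $k>c_{0}^{\,n}$ merely gives $t<1$, whereas the Taylor estimate requires $t\le t_{0}$ for some fixed $t_{0}<1$. This is exactly the role of the threshold $r_{0}$ in the statement---one chooses $r_{0}$ (equivalently $k$) large enough, depending only on $n$ and $c_{0}$, to ensure $t\le t_{0}$; once this is done, $F(t)\le 1+c(n)t$ holds with a purely dimensional $c(n)$, and multiplying back by $k^{4/n}$ yields \eqref{eq:1.6}.
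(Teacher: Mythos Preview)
Your approach is correct and essentially the same as the paper's: tie $r_0$ to $k$ via $k=\text{vol}(\Omega)r_0^{\,n}$, use the decay hypothesis to bound $\text{vol}(\Omega_{r_0})/\text{vol}(\Omega)\le c_0/k^{1/n}=t$, and then linearize the coefficient in $t$ by a first-order expansion (the paper phrases this as the Lagrange mean value theorem applied to $G(v)$, you as a Taylor estimate on $F(t)$). The one cosmetic difference is that the paper returns to the slightly sharper intermediate inequality \eqref{eq:2.15}, in which the numerator carries $\text{vol}(\Omega_r)$ with $r=2\pi\bigl((1+k)/(B_n(\text{vol}(\Omega)-\text{vol}(\Omega_{r_0})))\bigr)^{1/n}>r_0$, rather than the coarser $\text{vol}(\Omega_{r_0})$ of \eqref{eq:1.5}; this only affects the numerical constant and your route through Theorem~1.1 directly is perfectly adequate. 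Your observation that one must keep $t$ bounded away from $1$ to obtain a dimensional $c(n)$ is exactly the role of the paper's auxiliary constant $\alpha<1$.
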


\begin{theorem}  Let $\Omega$ be a bounded domain
with a piecewise smooth boundary $\partial \Omega$ in $\mathbf {R}^n$.
Eigenvalue $\Gamma_j$'s  of the clamped plate problem {\rm \eqref{eq:1.1}} satisfy
\begin{equation}\label{eq:1.6}
\begin{aligned}
 \frac{1}{k}\sum_{j=1}^k\Gamma_j\geq&
\frac{n}{n+4}\dfrac{16\pi^4}{\big(B_n\text{vol}(\Omega)\big)^{\frac{4}{n}}}k^{\frac{4}{n}}\\
 &+\frac{n+2}{12n(n+4)}\frac{\text{vol}(\Omega)}{I(\Omega)}\frac{n}{n+2}
 \dfrac{4\pi^2}{\big(B_n\text{vol}(\Omega)\big)^{\frac{2}{n}}}k^{\frac{2}{n}}\\
 &+\frac{(n+2)^2}{1152n(n+4)^2}
 \left(\frac{\text{vol}(\Omega)}{I(\Omega)}\right)^2,
\end{aligned}
\end{equation}
where $I(\Omega)$ is the moment of inertia  of $\Omega$.
\end{theorem}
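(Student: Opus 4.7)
The plan is to adapt the Fourier--extension/Bessel--inequality technique of Li--Yau, Kr\"oger and Melas to the clamped--plate operator, refining the optimization step that was carried out less sharply in \cite{[CW]}. Since each eigenfunction $u_j$ of \eqref{eq:1.1} lies in $W_0^{2,2}(\Omega)$, its zero extension to $\mathbf{R}^n$ lies in $H^2(\mathbf{R}^n)$ and the family $\{u_j\}$ is orthonormal in $L^2(\mathbf{R}^n)$. After translating $\Omega$ so that its centroid is at the origin (whence $I(\Omega)=\int_{\Omega}|x|^2\,dx$), the Plancherel identity gives
\[
\sum_{j=1}^k\Gamma_j=\sum_{j=1}^k\int_{\mathbf{R}^n}|\Delta u_j|^2\,dx=\int_{\mathbf{R}^n}|z|^4F(z)\,dz,\qquad F(z):=\sum_{j=1}^k|\widehat{u_j}(z)|^2,
\]
so the task reduces to bounding $\int|z|^4F\,dz$ from below under the constraints that $F$ must obey.

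I would next derive three constraints on $F$. Applying Bessel's inequality in $L^2(\Omega)$ to the test family $f_z(x)=(2\pi)^{-n/2}\chi_{\Omega}(x)e^{-iz\cdot x}$ yields the pointwise bound $F(z)\le M:=(2\pi)^{-n}\text{vol}(\Omega)$; using instead the weighted family $g_z^{(\ell)}(x)=(2\pi)^{-n/2}x_\ell\chi_{\Omega}(x)e^{-iz\cdot x}$ and summing over $\ell$ yields $\sum_{j=1}^k|\nabla\widehat{u_j}(z)|^2\le(2\pi)^{-n}I(\Omega)$. Combining the latter with the Cauchy--Schwarz identity $|\nabla F(z)|\le 2\sqrt{F(z)}\bigl(\sum_j|\nabla\widehat{u_j}(z)|^2\bigr)^{1/2}$ shows that $\sqrt{F}$ is Lipschitz with constant $L:=\sqrt{(2\pi)^{-n}I(\Omega)}$. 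Together with $\int_{\mathbf{R}^n}F\,dz=k$, these are the Melas--type constraints on $F$; it is the Lipschitz bound that carries the correction terms involving $\text{vol}(\Omega)/I(\Omega)$.

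The third step is to minimize $\int|z|^4F\,dz$ under these constraints. I would symmetrize first: replacing $F$ by its symmetric decreasing rearrangement $F^{*}$ can only decrease $\int|z|^4F$ (since $|z|^4$ is radially increasing), preserves $\int F=k$ and $F\le M$, and by the Polya--Szego principle preserves the Lipschitz bound on $\sqrt{F}$. It then remains to solve a one--variable problem: minimize $nB_n\int_0^{\infty}r^{n+3}F^{*}(r)\,dr$ over non-increasing radial profiles with $F^{*}(0)\le M$, $|(\sqrt{F^{*}})'|\le L$, and $nB_n\int_0^{\infty}r^{n-1}F^{*}(r)\,dr=k$. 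A convex--analytic argument identifies the minimizer as the plateau--with--parabolic--tail
\[
F^{*}(r)=\begin{cases}M, & 0\le r\le R_0,\\ \bigl(\sqrt{M}-L(r-R_0)\bigr)^{2}, & R_0\le r\le R_0+\sqrt{M}/L,\\ 0, & r\ge R_0+\sqrt{M}/L,\end{cases}
\]
where $R_0$ is fixed by the mass constraint $\int F^{*}=k$.

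Finally I would substitute this profile into $\int|z|^4F^{*}$, obtain a closed--form expression in $R_0$, $M$ and $L$, and use the mass constraint to rewrite everything in terms of $k$, $\text{vol}(\Omega)$ and $I(\Omega)$. The main obstacle is this concluding bookkeeping: one must expand the tail integrals via the binomial identity for $(R_0+t/L)^{n-1}$, collect terms through precisely the order at which the three summands on the right--hand side of \eqref{eq:1.6} appear, and verify that the remaining higher--order contributions have favourable signs so that the truncation is an honest lower bound. This is where the estimate \eqref{eq:1.3} of \cite{[CW]} lost sharpness by under-utilising the tail; retaining the full contribution of the parabolic tail is what produces the clean coefficients $\dfrac{n+2}{12n(n+4)}$ and $\dfrac{(n+2)^2}{1152n(n+4)^2}$ in the theorem.
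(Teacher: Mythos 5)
Your setup — zero extension, Plancherel, the three Bessel-type constraints on $F$, and the symmetrization step — is exactly the [CW] framework that the paper also begins from, and it is correct as far as it goes. Where you diverge from the paper, and where the proposal has a real gap, is the extremization step. The paper's actual proof does \emph{not} try to identify and integrate the exact minimizing profile. Its whole content is Lemma~3.1, a Melas-type inequality: the constraint $-\eta\le\psi'\le0$ on the rearranged profile $\psi=g$ is fed into a polynomial inequality $f(s)=bs^{b+4}-(b+4)\tau^4s^b+4\tau^{b+4}-4\tau^{b+2}(s-\tau)^2\ge0$, integrated over a unit interval $[\epsilon,\epsilon+1]$ chosen via the Melas exchange trick, and then $\tau$ is tuned as $\tau=(bA)^{1/b}\bigl(1+\tfrac{b+2}{12(b+4)}(bA)^{-2/b}\bigr)^{1/b}$ with a Taylor expansion producing exactly the three coefficients in \eqref{eq:1.6}. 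You never invoke, prove, or replace any analogue of this lemma, yet it is precisely what yields the advertised constants.

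There are two more specific issues with the route you sketch. First, the constraint you impose — $\sqrt{F^*}$ Lipschitz with constant $L=\sqrt{(2\pi)^{-n}I(\Omega)}$ — is genuinely stronger than the one the paper actually uses; the paper only carries a Lipschitz bound on $g$ itself, with constant $\eta=2(2\pi)^{-n}\sqrt{\text{vol}(\Omega)\,I(\Omega)}$ (obtained from your $\sqrt{F}$-bound combined with $F\le M$, and then never sharpened further). Under the paper's weaker constraint the implicit extremal profile is a plateau followed by a \emph{linear} tail, not a parabolic one; so the object you propose to integrate is not the one whose optimization produced the coefficients $\tfrac{n+2}{12n(n+4)}$ and $\tfrac{(n+2)^2}{1152n(n+4)^2}$. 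If your parabolic profile is the true extremizer for the stronger constraint, you would get a bound at least as large as the theorem's right-hand side — which is fine for proving the theorem — but your claim that this "produces the clean coefficients" of \eqref{eq:1.6} is unwarranted: the two optimizations do not coincide. Second, and more seriously, you stop exactly at the step that carries all the difficulty. Even granting the plateau-plus-parabola extremizer, you have a closed-form expression in $R_0$, $M$, $L$ with $R_0$ defined only implicitly through the mass constraint; turning that into a lower bound of the precise polynomial form $a_0k^{4/n}+a_1k^{2/n}+a_2$ valid for \emph{all} $k$ (not just asymptotically) requires expanding, truncating, and proving non-negativity of the remainder — which is exactly what Lemma~3.1's choice of $\tau$ and the inequalities \eqref{eq:3.10}, \eqref{eq:3.12}--\eqref{eq:3.15} accomplish in the paper. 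You identify this as "the main obstacle" and leave it undone, so the proof as written is not complete.
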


\vskip 5mm

\section{Upper bounds for eigenvalues}
\noindent
In this section, we will study the upper bounds for eigenvalues of
the clamped plate problem \eqref{eq:1.1}.

\vskip 2mm
\noindent
{\it Proof of  Theorem 1.1}.
Since $d(x)$ is the distance function from the point $x$ to the boundary $\partial \Omega$
of $\Omega$, we define a function $f_r$ for any fixed $r$ by
\begin{equation}\label{eq:2.1}
 f_r(x)=\begin{cases}
      1, & \ x\in \Omega, d(x)\geq \frac1r, \\
     r^2d^2(x), & \   x\in \Omega, d(x)<\frac1r,  \\
    0,   & \ \ \text{the other}.
     \end{cases}
\end{equation}
Let $u_j$ be an orthonormal eigenfunction
corresponding to the eigenvalue $\Gamma_j$, that is,  $u_j$ satisfies
\begin{equation*}
  {\begin{cases}
     \Delta^2 u_j = \Gamma_j  u_j,& \ \ {\rm in} \ \ \Omega ,\\
     u_j=\dfrac{\partial u_j}{\partial \nu}= 0 , & \ \ {\rm on}  \ \ \partial \Omega, \\
     \int_\Omega u_i(x)u_j(x)dx=\delta_{ij}, & \ \ \text{for any $i$, $j$}.
     \end{cases}}
\end{equation*}
Thus, $\{u_j\}$ forms an orthonormal basis of the Sobolev Space $W_0^{2,2}(\Omega)$.
For an arbitrary fixed point  $z\in\mathbf{R}^n$ and $r>0$,
a function
\begin{equation}\label{eq:2.2}
g_{r,z}(x)=e^{i\langle z,x\rangle}f_r(x),
\end{equation}
with $i=\sqrt{-1}$, belongs  to the Sobolev Space $W_0^{2,2}(\Omega)$.
Hence, we have
\begin{equation}\label{eq:2.3}
g_{r,z}(x)=\sum_{j=1}^{\infty}a_{r, j}(z)u_j(x),
\end{equation}
where
\begin{equation}\label{eq:2.4}
a_{r,j}(z)=\int_{\Omega}g_{r,z}(x)u_j(x)dx.
\end{equation}
Defining  a function
\begin{equation}\label{eq:2.5}
\varphi_k(x)=g_{r,z}(x)-\sum_{j=1}^{k}a_{r, j}(z)u_j(x),
\end{equation}
we have $\varphi_k=\dfrac{\partial \varphi_k}{\partial \nu}=0$ on $\partial\Omega$ and
$$
\int_{\Omega}\varphi_k(x)u_j(x)dx=0, \ \ \text{for} \  j=1, 2, \cdots, k.
$$
Therefore, $\varphi_k$ is a trial function. From Rayleigh-Ritz formula, we have
\begin{equation}\label{eq:2.6}
\Gamma_{k+1}\int_\Omega|\varphi_k(x)|^2dx\leq\int_\Omega |\Delta \varphi_k(x)|^2dx.
\end{equation}
From the definition of $\varphi_k$ and \eqref{eq:2.1}, we have
\begin{equation}\label{eq:2.7}
\begin{aligned}
\int_\Omega|\varphi_k(x)|^2dx&=\int_{\Omega}|g_{r,z}(x)-\sum_{j=1}^{k}a_{r, j}(z)u_j(x)|^2\\
&=\int_{\Omega}|f_{r}(x)|^2dx-\sum_{j=1}^{k}|a_{r, j}(z)|^2\\
&\geq \text{vol}(\Omega)-\text{vol}(\Omega_r)-\sum_{j=1}^{k}|a_{r, j}(z)|^2.
\end{aligned}
\end{equation}
From \eqref{eq:2.5} and Stokes' formula, we infer
\begin{equation}\label{eq:2.8}
\begin{aligned}
&\int_\Omega|\Delta \varphi_k(x)|^2dx=\int_\Omega|\Delta g_{r,z}(x)-\sum_{j=1}^{k}a_{r, j}(z)\Delta u_j(x)|^2dx\\
&=\int_\Omega\biggl(|\Delta g_{r,z}(x)|^2+|\sum_{j=1}^{k}a_{r, j}(z)\Delta u_j(x)|^2\biggl)dx\\
&\ \ \ -\int_\Omega\biggl(\Delta g_{r,z}(x)\sum_{j=1}^{k}\overline{a_{r, j}(z)}\Delta u_j(x)
+\overline{\Delta g_{r,z}(x)}\sum_{j=1}^{k} a_{r, j}(z)\Delta u_j(x)\biggl)dx\\
&=\int_\Omega|\Delta g_{r,z}(x)|^2dx-\sum_{j=1}^{k}\Gamma_j|a_{r, j}(z)|^2\\
&=\int_\Omega\biggl|-|z|^2f_r(x)+2i\langle z,\nabla f_r(x)\rangle +\Delta f_{r}(x)\biggl|^2dx
-\sum_{j=1}^{k}\Gamma_j|a_{r, j}(z)|^2\\
&=\int_\Omega\biggl\{\biggl(-|z|^2f_r(x) +\Delta f_{r}(x)\biggl)^2+4\langle z,\nabla f_r(x)\rangle^2\biggl\}dx
-\sum_{j=1}^{k}\Gamma_j|a_{r, j}(z)|^2\\
\end{aligned}
\end{equation}
since
$$
\Delta g_{r,z}(x)=e^{i\langle z,x\rangle}\biggl(-|z|^2f_r(x)+2i\langle z,\nabla f_r(x)\rangle +\Delta f_{r}(x)\biggl).
$$
According to the definition of the function $f_r$, we have
\begin{equation*}
 \Delta f_r(x)=\begin{cases}
      0, & \ x\in \Omega, d(x)\geq \frac1r, \\
     r^2\Delta d^2(x), & \   x\in \Omega, d(x)<\frac1r,  \\
    0,   & \ \ \text{the other}.
     \end{cases}
\end{equation*}
Hence, we obtain, from the Schwarz inequality and $|\nabla d(x)|^2=1$,
\begin{equation}\label{eq:2.9}
\begin{aligned}
&\int_\Omega\biggl\{\biggl(-|z|^2f_r(x) +\Delta f_{r}(x)\biggl)^2+4\langle z,\nabla f_r(x)\rangle^2\biggl\}dx\\
&\leq |z|^4\text{vol}(\Omega)+24r^2|z|^2\text{vol}(\Omega_r)+\int_{\Omega_r}\biggl(\Delta f_{r}(x)\biggl)^2dx.
\end{aligned}
\end{equation}
 For a point $x\in \Omega$, there is a point $y=y(x)\in \partial \Omega$
such that
$
d(x)={\text {dist}}(x,y),
$
then we know that \begin{equation}\label{eq:2.16}
\Delta d^2(x)=2n-\sum_{j=1}^{n-1}\dfrac{2}{1-\kappa_jd(x)},
\end{equation}
where $\kappa_1, \kappa_2, \cdots, \kappa_{n-1}$ are the principal curvatures of $\partial\Omega$
at the point $y$. Since the boundary $\partial \Omega$ of the domain $\Omega$ is smooth and a compact hypersurface, one has that all of $\kappa_j$ are bounded. Without loss of generality, we can assume that $|\kappa_j(y)|\leq \kappa$ for any $y\in \partial\Omega$, $1\leq j\leq n-1$, then it follows that if $r\geq r_0>n\kappa$, then we see from \eqref{eq:2.16}
$$
0<\Delta d^2(x)<2n,\ \ x\in \Omega_r
$$
and
$$
\int_{\Omega_r}\biggl(\Delta f_{r}(x)\biggl)^2dx\leq 4n^2r^4\text{vol}(\Omega_r).
$$
Hence, if $r>r_0$, then we obtain
\begin{equation}\label{eq:2.10}
\begin{aligned}
&\int_\Omega|\Delta \varphi_k(x)|^2dx\\
&\leq |z|^4\text{vol}(\Omega)+24r^2|z|^2\text{vol}(\Omega_r)
+4n^2r^4\text{vol}(\Omega_r)-\sum_{j=1}^{k}\Gamma_j|a_{r, j}(z)|^2.
\end{aligned}
\end{equation}
From \eqref{eq:2.6}, \eqref{eq:2.7} and \eqref{eq:2.10}, we have
\begin{equation}\label{eq:2.11}
\begin{aligned}
&\Gamma_{k+1}\bigl(\text{vol}(\Omega)-\text{vol}(\Omega_r)\bigl)\\
&\leq |z|^4\text{vol}(\Omega)+24r^2|z|^2\text{vol}(\Omega_r)
+4n^2r^4\text{vol}(\Omega_r)+\sum_{j=1}^{k}(\Gamma_{k+1}-\Gamma_j)|a_{r, j}(z)|^2,
\end{aligned}
\end{equation}
here $r>r_0$.

\noindent Let  $B_n(r)$ denote the ball with a radius $r$ and the origin o in $\mathbf{R}^n$.
By integrating the above inequality on the  variable $z$ on the ball $B_n(r)$,  we derive
\begin{equation}\label{eq:2.12}
\begin{aligned}
&r^nB_n\bigl(\text{vol}(\Omega)-\text{vol}(\Omega_r)\bigl)\Gamma_{k+1}\\
&\leq r^{n+4}B_n\biggl(\dfrac n{n+4}\text{vol}(\Omega)+24\dfrac{n}{n+2}\text{vol}(\Omega_r)
+4n^2\text{vol}(\Omega_r)\biggl)\\
&\ \ \ +\sum_{j=1}^{k}(\Gamma_{k+1}-\Gamma_j)\int_{B_n(r)}|a_{r, j}(z)|^2dz,\ \ \ \ r>r_0.
\end{aligned}
\end{equation}
From Parseval's identity  for Fourier transform,  we have
\begin{equation}\label{eq:2.13}
\begin{aligned}
&\int_{B_n(r)}|a_{r, j}(z)|^2dz\leq \int_{\mathbf{R}^n}|a_{r, j}(z)|^2dz\\
&=\int_{\mathbf{R}^n}\bigl|\int_{\mathbf{R}^n}e^{i\langle z,x\rangle}f_r(x)u_j(x)dx\bigl|^2dz\\
&=(2\pi)^n\int_{\mathbf{R}^n}\bigl|{\widehat{f_ru_j}}(z)\bigl|^2dz
=(2\pi)^n\int_{\mathbf{R}^n}\bigl|f_r(x)u_j(x)\bigl|^2dx\\
&\leq (2\pi)^n.
\end{aligned}
\end{equation}
We obtain
\begin{equation}\label{eq:2.14}
\begin{aligned}
&r^nB_n\bigl(\text{vol}(\Omega)-\text{vol}(\Omega_r)\bigl)\Gamma_{k+1}\\
&\leq r^{n+4}B_n\biggl(\dfrac n{n+4}\text{vol}(\Omega)+24\dfrac{n}{n+2}\text{vol}(\Omega_r)
+4n^2\text{vol}(\Omega_r)\biggl)\\
&\ \ \ +(2\pi)^n\sum_{j=1}^{k}(\Gamma_{k+1}-\Gamma_j),\ \ \ \ r>r_0.
\end{aligned}
\end{equation}
Taking $r=2\pi\biggl(\dfrac{1+k}{B_n\bigl(\text{vol}(\Omega)-\text{vol}(\Omega_{r_0})\bigl)}\biggl)^{\frac1n}$,  noting $k\geq \text{vol}(\Omega)r_0^n$ and $\frac{2\pi}{(B_n)^{\frac{1}{n}}}>1$, then we can obtain $r>r_0$ and
\begin{equation}\label{eq:2.15}
\begin{aligned}
&\dfrac{1}{1+k}\sum_{j=1}^{k+1}\Gamma_j\\
&\leq16\pi^4\dfrac{\dfrac n{n+4}\text{vol}(\Omega)+\dfrac{24n}{n+2}\text{vol}(\Omega_r)
+4n^2\text{vol}(\Omega_r)}{(\text{vol}(\Omega)-\text{vol}(\Omega_{r_0})\bigl)^{\frac{n+4}n}}\dfrac{1}{B_n^{\frac4n}}(1+k)^{\frac4n}\\
&\leq16\pi^4\dfrac{\dfrac n{n+4}+(24\dfrac{n}{n+2}+4n^2)\dfrac{\text{vol}(\Omega_{r_0}) }{\text{vol}(\Omega)}}{\bigl(1-\dfrac{\text{vol}(\Omega_{r_0}) }{\text{vol}(\Omega)}\bigl)^{\frac{n+4}n}}\dfrac{1}{\bigl(B_n\text{vol}(\Omega)\bigl)^{\frac4n}}(1+k)^{\frac4n}
\end{aligned}
\end{equation}
This completes the proof of Theorem 1.1.
$$\eqno{\Box}$$

\noindent
{\it Proof of the corollary 1.1}.
From \eqref{eq:2.15} we have
\begin{equation}
\begin{aligned} \dfrac{1}{1+k}\sum_{j=1}^{k+1}\Gamma_j
\leq\dfrac{1+4(\frac{6}{n+2}+n)(n+4)\dfrac{\text{vol}(\Omega_{r}) }{\text{vol}(\Omega)}}{\bigl(1-\dfrac{\text{vol}(\Omega_{r_0}) }{\text{vol}(\Omega)}\bigl)^{\frac{n+4}n}}\dfrac n{n+4}\dfrac{16\pi^4}{\bigl(B_n\text{vol}(\Omega)\bigl)^{\frac4n}}(1+k)^{\frac4n}.\\
\end{aligned}
\end{equation}
Since $r=2\pi\biggl(\dfrac{1+k}{B_n\bigl(\text{vol}(\Omega)-\text{vol}(\Omega_{r_0})\bigl)}\biggl)^{\frac1n}$, we have
$$
\dfrac{\text{vol}(\Omega_{r})}{\text{vol}(\Omega)}\leq c_0\dfrac{B_n^{\frac1n}}{2\pi}\bigl(1-\dfrac{\text{vol}(\Omega_{r_0}) }{\text{vol}(\Omega)}\bigl)^{\frac{1}n} (1+k)^{-\frac1n}.
$$
Taking $c_1=4(\frac{6}{n+2}+n)(n+4)\dfrac{B_n^{\frac1n}}{2\pi}c_0$, we have
\begin{equation}
\begin{aligned}
&\dfrac{1}{1+k}\sum_{j=1}^{k+1}\Gamma_j\\
&\leq\dfrac{1+c_1\bigl(1-\dfrac{\text{vol}(\Omega_{r_0}) }{\text{vol}(\Omega)}\bigl)^{\frac{1}n}(1+k)^{-\frac1n}}{\bigl(1-\dfrac{\text{vol}(\Omega_{r_0}) }{\text{vol}(\Omega)}\bigl)^{\frac{n+4}n}}\dfrac n{n+4}\dfrac{16\pi^4}{\bigl(B_n\text{vol}(\Omega)\bigl)^{\frac4n}}(1+k)^{\frac4n}.\\
\end{aligned}
\end{equation}
Since there exists a constant $\alpha$ such that
$$
0<v=\dfrac{\text{vol}(\Omega_{r_0}) }{\text{vol}(\Omega)}\leq \dfrac{c_0}{(1+k)^{\frac1n}}\leq \alpha <1
$$
with $r_0=\biggl(\dfrac{1+k}{\text{vol}(\Omega)}\biggl)^{\frac1n}$, we define  a function
$$
G(v)=\dfrac{1+c_1(1-v)^{\frac1n}(1+k)^{-\frac1n}}{(1-v)^{\frac{n+4}n}}
$$
with $G(0)=1+c_1(1+k)^{-\frac1n}$. Since
$$
G^{\prime}(v)=\dfrac{1+\dfrac4n+c_1(1+\dfrac3n)(1-v)^{\frac1n}(1+k)^{-\frac1n}}{(1-v)^{\frac{2n+4}n}},
$$
by Lagrange mean value theorem, there exists
 $0<\theta<1$ such that
$$G(v)=G(0)+G^{\prime}(\theta v)v.$$
Hence, there exists a constant $c(n)$ only depended on $n$ such that
\begin{equation*}
\begin{aligned}
G(v)=&G(0)+G^{\prime}(\theta v)v\\
    =&1+c_1(1+k)^{-\frac1n}+\dfrac{1+\dfrac4n+c_1(1+\dfrac3n)(1-\theta v)^{\frac1n}(1+k)^{-\frac1n}}{(1-\theta v)^{\frac{2n+4}n}} v\\
    \leq &1+c_1(1+k)^{-\frac1n}+\dfrac{1+\dfrac4n+c_1(1+\dfrac3n)}{(1-\theta \alpha)^{\frac{2n+4}n}} c_0(1+k)^{-\frac1n}\\
    \leq& 1+c_0c(n)(1+k)^{-\frac{1}{n}},
\end{aligned}
\end{equation*}
that is,
 $$
 \dfrac{1+c_1\bigl(1-\dfrac{\text{vol}(\Omega_{r_0}) }{\text{vol}(\Omega)}\bigl)^{\frac{1}n}(1+k)^{-\frac1n}}{\bigl(1-\dfrac{\text{vol}(\Omega_{r_0}) }{\text{vol}(\Omega)}\bigl)^{\frac{n+4}n}}
 \leq 1+c_0c(n)(1+k)^{-\frac1n}.
 $$
Therefore, we obtain
\begin{equation*}
\begin{aligned} \dfrac{1}{1+k}\sum_{j=1}^{k+1}\Gamma_j
\leq \dfrac n{n+4}\dfrac{16\pi^4}{\bigl(B_n\text{vol}(\Omega)\bigl)^{\frac4n}}
\biggl((1+k)^{\frac4n}+c_0c(n)(1+k)^{\frac3n}\biggl).\\
\end{aligned}
\end{equation*}
This finishes the proof of the corollary 1.1.
$$\eqno{\Box}$$

\vskip 5mm
\section{Lower bounds for eigenvalues}

\noindent
In this section, we will give a proof of the theorem 1.2.
The following  lemma 3.1 will play an important role in the proof of theorem 1.2.

\begin{lemma}  For  constants $b\geq2$, $\eta>0$, if  $\psi: [0, +\infty)\rightarrow [0, +\infty)$ is  a
decreasing  function such that
$$-\eta\leq \psi^{'}(s)\leq 0$$
and
$$
A:=\int_0^{\infty}s^{b-1}\psi(s)ds>0,
$$
then, we have
\begin{equation}\label{eq:3.1}
\begin{aligned}
\int_0^\infty
s^{b+3}\psi(s)ds&\geq\dfrac{1}{b+4}(bA)^{\frac{b+4}{b}}\psi(0)^{-\frac{4}{b}}
+\frac{1}{3b(b+4)\eta^2}(bA)^{\frac{b+2}{b}}\psi(0)^{\frac{2b-2}{b}}\\
&\ \ \ +\frac{{(b+2)^2}}{72b(b+4)^2\eta^4}A\psi(0)^4
+\frac{q(b)}{\eta^6}(bA)^{\frac{b-2}{b}}\psi(0)^{\frac{6b+2}{b}}\\
&\geq\dfrac{1}{b+4}(bA)^{\frac{b+4}{b}}\psi(0)^{-\frac{4}{b}}+\frac{1}{3b(b+4)\eta^2}(bA)^{\frac{b+2}{b}}\psi(0)^{\frac{2b-2}{b}}\\
&
\ \ \ +\frac{{(b+2)^2}}{72b(b+4)^2\eta^4}A\psi(0)^4.
\end{aligned}
\end{equation}

where
$$
q(b)=\begin{cases}
\dfrac{(13b^3+56b^2-52b-32)(b+2)^3}{(12)^4b^4(b+4)^4},  & \quad \text{for $b\geq 4$ or $b=2$}, \\
& \\
\dfrac{(4b^3+11b^2-16b+4)(b+2)^3}{3\times (12)^3b^3(b+4)^3\eta^6}, &   \quad \text{ for $2<b<4$.} \\
\end{cases}
$$
\end{lemma}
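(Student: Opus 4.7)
The plan is variational: I lower bound $\int_0^\infty s^{b+3}\psi\,ds$ by the same integral for an explicit extremal $\psi^*$ sharing $\psi(0) = M$ and $\int_0^\infty s^{b-1}\psi\,ds = A$, and then read off the series expansion from the closed form of $\int s^{b+3}\psi^*$. For each $t \ge 0$, let
\[
\psi^*(s) = \begin{cases} M, & 0 \le s \le t,\\ M - \eta(s-t), & t \le s \le t + M/\eta,\\ 0, & s \ge t + M/\eta. \end{cases}
\]
Choose $t$ so that $\int_0^\infty s^{b-1}\psi^*\,ds = A$ (the admissibility of $\psi$ guarantees that such a $t \ge 0$ exists). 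Writing $R = t + M/\eta$, direct integration gives
\[
A = \frac{\eta(R^{b+1}-t^{b+1})}{b(b+1)}, \qquad \int_0^\infty s^{b+3}\psi^*\,ds = \frac{\eta(R^{b+5}-t^{b+5})}{(b+4)(b+5)}.
\]

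The comparison $\int s^{b+3}\psi \ge \int s^{b+3}\psi^*$ follows from a single-sign-change argument on $\phi := \psi - \psi^*$. The normalization gives $\int s^{b-1}\phi\,ds = 0$. On $[0,t]$, $\psi$ decreasing from $M$ forces $\phi \le 0$; on $[t, R]$, $\phi'(s) = \psi'(s) + \eta \ge 0$, so $\phi$ is nondecreasing; on $[R, \infty)$, $\psi^* = 0 \le \psi$, so $\phi \ge 0$. Thus there exists $s_0 \in [t, R]$ with $\phi(s)(s-s_0) \ge 0$ a.e., and
\[
\int_0^\infty s^{b+3}\phi\,ds \;=\; \int_0^\infty s^{b-1}(s^4 - s_0^4)\phi(s)\,ds \;\ge\; 0,
\]
the integrand being pointwise nonnegative. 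This reduces the lemma to an explicit lower bound on $\int s^{b+3}\psi^*$.

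It remains to expand the extremal value. Set $\epsilon = M/\eta$ and use $R^k - t^k = \sum_{j\ge 1}\binom{k}{j}t^{k-j}\epsilon^j$ for $k = b+1, b+5$. The $A$-relation becomes $bA/M = t^b + \tfrac{b}{2}t^{b-1}\epsilon + \tfrac{b(b-1)}{6}t^{b-2}\epsilon^2 + \cdots$, which I invert perturbatively to express $t^b$ as a power series in $\epsilon$. Substituting into the formula for $\int s^{b+3}\psi^*$ and collecting by powers of $\epsilon^2 = M^2/\eta^2$ produces an expansion whose first four terms one matches to the coefficients $\tfrac{1}{b+4}$, $\tfrac{1}{3b(b+4)}$, $\tfrac{(b+2)^2}{72b(b+4)^2}$, and $q(b)/\eta^6$ in \eqref{eq:3.1}. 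The piecewise formula for $q(b)$ reflects the fact that the factor $(b-2)(b-3)(b-4)$ appearing in the $\epsilon^6$ contribution has constant sign only on the ranges $b = 2$, $2 < b < 4$, and $b \ge 4$; checking $q(b) \ge 0$ on each of these ranges is what allows one to discard this nonnegative term and arrive at the cleaner three-term lower bound on the second line.

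The main obstacle is the bookkeeping in this final step: inverting the series for $bA/M$ in terms of $t^b$ through enough orders in $\epsilon$, substituting into the closed form for $\int s^{b+3}\psi^*$, and recognizing the coefficients exactly as $\tfrac{(b+2)^2}{72b(b+4)^2}$ and $q(b)$ requires careful manipulation of polynomial identities in $b$. Equally, one must verify that the truncated series genuinely provides a lower bound (not merely an asymptotic one), which reduces to checking the sign of the polynomial remainder past $O(\eta^{-6})$ together with the case-by-case positivity of $q(b)$.
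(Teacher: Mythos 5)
Your variational reduction is correct and is essentially the paper's first half in disguise: after writing $h=-\psi'$ and integrating by parts, the paper invokes a Melas-type rearrangement to replace $h$ by $\chi_{[\epsilon,\epsilon+1]}$, and $\chi_{[\epsilon,\epsilon+1]}$ is exactly $-\psi^{*\prime}$ for your truncated ramp (after normalizing $\psi(0)=\eta=1$). Your single-sign-change argument for $\phi=\psi-\psi^*$, combined with $\int s^{b-1}\phi=0$ and the monotone factor $s^4-s_0^4$, is a clean and valid way to get $\int s^{b+3}\psi\ge\int s^{b+3}\psi^*$; this matches the paper's chain \eqref{eq:3.3}--\eqref{eq:3.6}.

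The gap is entirely in the second half. You propose to expand $R^k-t^k=\sum_{j\ge1}\binom{k}{j}t^{k-j}\epsilon^j$, invert the relation $A(t)$ perturbatively, and substitute. For non-integer $b$ this is not a finite identity but the binomial series, which converges only when $\epsilon<t$. Precisely the opposite regime occurs at the admissibility boundary: when $bA$ is near its minimum $1/(b+1)$ one has $t\to0$ and $\epsilon/t\to\infty$, so the series does not converge and the ``perturbative inversion'' is undefined there. You flag, but do not resolve, the two hard points yourself: (i) verifying that the truncation is an honest lower bound, not just an asymptotic one, and (ii) the algebra needed to identify the coefficients including $q(b)$. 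As written, these are not bookkeeping but the actual content of the proof.

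The paper avoids the inversion entirely. After the reduction it introduces the auxiliary function
\[
f(s)=bs^{b+4}-(b+4)\tau^4 s^{b}+4\tau^{b+4}-4\tau^{b+2}(s-\tau)^2,
\]
shows $f\ge0$ for all $s>0$ by analyzing its two critical points, and integrates over $[\epsilon,\epsilon+1]$ to get the one-parameter family of exact lower bounds
\[
D\ \ge\ \frac{1}{b(b+4)}\bigl\{(b+4)\tau^4\,bA-4\tau^{b+4}+\tfrac13\tau^{b+2}\bigr\},\qquad \tau>0,
\]
valid with no convergence caveats. Only then does it pick $\tau=(bA)^{1/b}\bigl(1+\tfrac{b+2}{12(b+4)}(bA)^{-2/b}\bigr)^{1/b}$ and bound $(1+u)^{4/b}$, $(1+u)^{(b+2)/b}$ from below by finite Taylor polynomials, using $(b+1)bA\ge1$ to control $u$ and to control the signs of the discarded remainder terms; the case split $b\ge4$ or $b=2$ versus $2<b<4$ in $q(b)$ comes from those sign checks. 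If you want to salvage your route, the cleanest fix is to abandon the series inversion and instead observe that proving $\int s^{b+3}\psi^*\ge\mathrm{RHS}$ for the ramp is equivalent to exhibiting a good $\tau$ in an inequality of exactly the paper's form, which brings you back to the $f(s)\ge0$ device.
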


\begin{proof}  By defining
$$\varphi(t)=\dfrac{\psi\big(\dfrac{\psi(0)}{\eta} t\big)}{\psi(0)},$$
we have $\varphi(0)=1$ and $-1\leq \varphi^{'}(t)\leq 0$.
Hence, without loss of generality, we can assume
$$\psi(0)=1\ \ \ {\rm and} \ \ \ \eta=1.$$
Define
\begin{equation}\label{eq:3.2}
D:=\int_0^\infty s^{b+3}\psi(s)ds.
\end{equation}
If $D=\infty$, the conclusion is correct. Hence, one can assume that
$$D=\int_0^\infty s^{b+3}\psi(s)ds<\infty.$$
 Thus,   $\lim\limits_{s\rightarrow \infty} s^{b+3}\psi(s)=0$ holds.
 Putting $h(s)=-\psi^{'}(s)$ for $s\geq 0$, we have
 $$
 0\leq h(s)\leq 1,\ \ \int_0^\infty h(s)ds=\psi(0)=1.
 $$
By making use of integration by parts, one has
\begin{equation}\label{eq:3.3}
\int_0^\infty s^bh(s)ds=b\int_0^\infty s^{b-1}\psi(s)ds=bA,
\end{equation}
\begin{equation}\label{eq:3.4}
\int_0^\infty s^{b+4}h(s)ds\leq (b+4)D
\end{equation}
since $\psi(s)\geq 0$.
By the  same assertion as in  \cite{[M]}, one can infer  that there exists an $\epsilon\geq0$ such that
\begin{equation}\label{eq:3.5}
\int_\epsilon^{\epsilon+1} s^bds=\int_0^\infty s^bh(s)ds=bA,
\end{equation}
\begin{equation}\label{eq:3.6}
\int_\epsilon^{\epsilon+1} s^{b+4}ds\leq\int_0^\infty s^{b+4}h(s)ds\leq(b+4)D.
\end{equation}
Since function $f(s)$ defined by
\begin{equation}\label{eq:3.7}
f(s)=bs^{b+4}-(b+4)\tau^4s^b+4\tau^{b+4}- 4\tau^{b+2}(s-\tau)^2,\ \ \ {\rm for}\ {\rm any} \ \tau>0,
\end{equation}
only has two critical points, one is  $s=\tau$, the other one is in the interval $(0, \tau)$,
we have $f(s)\geq 0$.
By  integrating the function $f(s)$ from $\epsilon$ to $\epsilon+1$, we deduce, from \eqref{eq:3.3} and \eqref{eq:3.4},
\begin{equation}\label{eq:3.8}
b(b+4)D-(b+4)\tau^4bA+4\tau^{b+4}\geq\dfrac13\tau^{b+2},\ \ \ {\rm for}\ {\rm any} \ \tau>0.
\end{equation}
Hence, we have, for  any  $\tau>0$,
\begin{equation}\label{eq:3.9}
\int_0^\infty s^{b+3}\psi(s)ds=D
\geq \frac{1}{b(b+4)}\big\{(b+4)\tau^4bA-4\tau^{b+4}+\dfrac13\tau^{b+2}\big\}.
\end{equation}
For $b\geq 4$ or $b=2$, we have, from Taylor expansion formula,
\begin{equation*}
\begin{aligned}
(1+t)^{\frac{4}b}\geq&   1+\dfrac4b t+\dfrac{2(4-b)}{b^2}t^2+\frac{2(4-b)(4-2b)}{3b^3}t^3\\
&+\frac{(4-b)(2-b)(4-3b)}{3b^4}t^4.
\end{aligned}
\end{equation*}

\begin{equation*}
(1+t)^{\frac{b+2}b}\geq 1+\dfrac{(b+2)}{b} t+\dfrac{(b+2)}{b^2}t^2+\frac{(b+2)(2-b)}{3b^3}t^3.
\end{equation*}
Since it is not hard to prove
\begin{equation}\label{eq:3.10}
\frac{1}{b+1}=\int_0^{1} s^bds\leq\int_0^\infty s^bh(s)ds=bA,
\end{equation}
by making use of the inequality $(s^b-1)(h(s)-\chi(s))\geq 0$ for $s\in [0,\infty)$, where
$\chi$ is the characteristic function of the interval $[0,1]$,
we have
$$
(b+1)bA\geq 1.
$$
Taking
\begin{equation*}
\tau=(bA)^{\frac1b}\big(1+\frac{b+2}{12(b+4)}(bA)^{\frac{-2}{b}}\big)^{\frac{1}{b}},
\end{equation*}
we have
\begin{equation}\label{eq:3.11}
\begin{aligned}
&(b+4)\tau^4bA-4\tau^{b+4}+\dfrac13\tau^{b+2}\\
=&(bA)^{1+\frac4b}\bigl(b-\dfrac{b+2}{3(b+4)}(bA)^{\frac{-2}{b}}\bigl)
\big(1+\frac{b+2}{12(b+4)}(bA)^{\frac{-2}{b}}\big)^{\frac{4}{b}}\\
&+\dfrac13(bA)^{1+\frac2b}
\big(1+\frac{b+2}{12(b+4)}(bA)^{\frac{-2}{b}}\big)^{\frac{b+2}{b}}.\\
\end{aligned}
\end{equation}
Putting
$$
t=\frac{b+2}{12(b+4)}(bA)^{\frac{-2}{b}},
$$
we derive, for $b\geq 4$ or $b=2$,
\begin{equation}\label{eq:3.12}
\begin{aligned}
&\bigl(b-\dfrac{b+2}{3(b+4)}(bA)^{\frac{-2}{b}}\bigl)
\big(1+\frac{b+2}{12(b+4)}(bA)^{\frac{-2}{b}}\big)^{\frac{4}{b}}\\
=&(b-4t)(1+t)^{\frac4b}\\
\geq &(b-4t)(1+\dfrac4b t+\dfrac{2(4-b)}{b^2}t^2+\dfrac{2(4-b)(4-2b)}{3b^3}t^3\\
&+\dfrac{(4-b)(2-b)(4-3b)}{3b^4}t^4)\\
=&b-\dfrac{2(4+b)}bt^2-\dfrac{4(4-b)(4+b)}{3b^2}t^3-\dfrac{(4-b)(2-b)(4+b)}{b^3}t^4\\
&-\dfrac{4(4-b)(2-b)(4-3b)}{3b^4}t^5\\
\geq & b-\dfrac{2(4+b)}bt^2-\dfrac{4(4-b)(4+b)}{3b^2}t^3-\dfrac{(4-b)(2-b)(4+b)}{b^3}t^4\\
=&b-\dfrac{2(4+b)}b\biggl(\frac{b+2}{12(b+4)}(bA)^{\frac{-2}{b}}\biggl)^2
-\dfrac{4(4-b)(4+b)}{3b^2}\biggl(\frac{b+2}{12(b+4)}(bA)^{\frac{-2}{b}}\biggl)^3\\
&-\dfrac{(4-b)(2-b)(4+b)}{b^3}\biggl(\frac{b+2}{12(b+4)}(bA)^{\frac{-2}{b}}\biggl)^4,\\
\end{aligned}
\end{equation}
\begin{equation}\label{eq:3.13}
\begin{aligned}
&
\big(1+\frac{b+2}{12(b+4)}(bA)^{\frac{-2}{b}}\big)^{\frac{b+2}{b}}\\
=&(1+t)^{\frac{b+2}{b}}\\
\geq &1+\dfrac{(b+2)}{b} t+\dfrac{(b+2)}{b^2}t^2+\frac{(b+2)(2-b)}{3b^3}t^3\\
=&1+\dfrac{2+b}b\biggl(\frac{b+2}{12(b+4)}(bA)^{\frac{-2}{b}}\biggl)+\dfrac{2+b}{b^2}\biggl(\frac{b+2}{12(b+4)}(bA)^{\frac{-2}{b}}\biggl)^2\\
&+\dfrac{(b+2)(2-b)}{3b^3}\biggl(\frac{b+2}{12(b+4)}(bA)^{\frac{-2}{b}}\biggl)^3.\\
\end{aligned}
\end{equation}
From \eqref{eq:3.11}, \eqref{eq:3.12} and \eqref{eq:3.13}, we obtain
\begin{equation}\label{eq:3.14}
\begin{aligned}
&(b+4)\tau^4bA-4\tau^{b+4}+\dfrac13\tau^{b+2}\\
\geq & b(bA)^{1+\frac4b}-\dfrac{2(4+b)}b\biggl(\frac{b+2}{12(b+4)}\biggl)^2(bA)\\
&-\dfrac{4(4-b)(4+b)}{3b^2}\biggl(\frac{b+2}{12(b+4)}\biggl)^3(bA)^{1-\frac{2}{b}},
\end{aligned}
\end{equation}
\begin{equation*}
\begin{aligned}
&-\dfrac{(4-b)(2-b)(4+b)}{b^3}\biggl(\frac{b+2}{12(b+4)}\biggl)^4(bA)^{1-\frac{4}{b}}\\
&+\dfrac13(bA)^{1+\frac2b}+\dfrac{2+b}{3b}\biggl(\frac{b+2}{12(b+4)}\biggl)(bA)
+\dfrac{2+b}{3b^2}\biggl(\frac{b+2}{12(b+4)}\biggl)^2(bA)^{1-\frac2b}\\
&+\dfrac{(b+2)(2-b)}{9b^3}\biggl(\frac{b+2}{12(b+4)}\biggl)^3(bA)^{1-\frac4b}\\
=&b(bA)^{1+\frac4b}+\dfrac13(bA)^{1+\frac2b}
+\frac{1}{72}\dfrac{(b+2)^2}{b(b+4)}(bA)\\
&+\dfrac{4(b-1)(b+4)}{3b^2}\biggl(\frac{b+2}{12(b+4)}\biggl)^3(bA)^{1-\frac2b}\\
&+\dfrac{(b^2-4)(8-3b)}{36b^3}\biggl(\frac{b+2}{12(b+4)}\biggl)^3(bA)^{1-\frac4b}.\\
\end{aligned}
\end{equation*}
From $b\geq 4$ or $b=2$ and \eqref{eq:3.10}, we have
$$
(bA)^{\frac2b}\geq \dfrac1{(b+1)^{\frac2b}}\geq\dfrac13
$$
since $(b+1)^{\frac2b}\leq 3$,  and
\begin{equation}\label{eq:3.15}
\dfrac{4(b-1)(b+4)}{3b^2}+\dfrac{(b^2-4)(8-3b)}{36b^3}(bA)^{-\frac2b}
\geq \frac{13b^3+56b^2-52b-32}{12b^3}.
\end{equation}
According \eqref{eq:3.9}, \eqref{eq:3.14} and \eqref{eq:3.15},  we obtain
\begin{equation*}
\begin{aligned}
&\int_0^\infty s^{b+3}\psi(s)ds=D\\
\geq& \frac{1}{b(b+4)}\big\{(b+4)\tau^4bA-4\tau^{b+4}+\dfrac13\tau^{b+2}\big\}\\
\geq& \frac{1}{b(b+4)}\biggl\{b(bA)^{1+\frac{4}{b}}+\frac{1}{3}(bA)^{1+\frac{2}{b}}+
\frac{1}{72}\frac{(b+2)^2}{b(b+4)}(bA)\\
&+\frac{13b^3+56b^2-52b-32}{12b^3}\biggl(\frac{b+2}{12(b+4)}\biggl)^3
(bA)^{1-\frac{2}{b}}\biggl\}.
\end{aligned}
\end{equation*}
For $2<b<4$, we can obtain the following inequality using the same method as the case of $b\geq4$,
\begin{equation*}
\begin{aligned}
& \int_0^\infty s^{b+3}\psi(s)ds=D\\
\geq& \frac{1}{b(b+4)}\biggl\{b(bA)^{1+\frac{4}{b}}+\frac{1}{3}(bA)^{1+\frac{2}{b}}+
\frac{1}{72}\frac{(b+2)^2}{b(b+4)}(bA)\\
&+\frac{12b^3+33b^2-48b+12}{9b^3}\biggl(\frac{b+2}{12(b+4)}\biggl)^3
(bA)^{1-\frac{2}{b}}\biggl\}.\\
\end{aligned}
\end{equation*}
This finishes the proof of the lemma 3.1.
\end{proof}

\vskip 3mm
\noindent
{\it Proof of  Theorem 1.2}. Using the same notations as those of \cite{[CW]}, we can obtain from Lemma 3.1 that
\begin{equation}\label{eq:3.16}
\aligned
\sum_{j=1}^k\Gamma_j\geq& nB_n\int_0^\infty s^{n+3}g(s)ds\\
    \geq& \frac{n}{n+4}(B_n)^{-\frac{4}{n}}k^{\frac{n+4}{n}}g(0)^{-\frac{4}{n}}
    +\frac{1}{3(n+4)\eta^2}k^{\frac{n+2}{n}}(B_n)^{-\frac{2}{n}}g(0)^{\frac{2n-2}{n}}\\
   &+\frac{(n+2)^2}{72n(n+4)^2\eta^4}k g(0)^4,
\endaligned
\end{equation}
where $g: [0, +\infty)\rightarrow
[0, (2\pi)^{-n}\text{vol}(\Omega)]$ is a non-increasing function of $|x|$ and $g(x)$ is defined by $g(|x|):=h^*(x)$. Here $h^*$ is the symmetric decreasing rearrangement of $h$, $h$ is defined by $h(z):=\sum_{j=1}^k|\widehat{\varphi}_j(z)|^2$, $\widehat{\varphi}_j(z)$ is the Fourier transform of the
trial function $\varphi_j(x)$,
\begin{equation*}
   \varphi_j(x)={\begin{cases}
    u_j(x), & \ \  x\in \Omega ,\\
     0 , & \ \ x\in \mathbf{R}^n\setminus\Omega, \\
         \end{cases}}
\end{equation*}
here $u_j$ is an orthonormal eigenfunction
corresponding to the eigenvalue $\Gamma_j$.

Now defining a function $F$ by
\begin{equation}\label{eq:3.17}
\aligned
F(t)=&\frac{n}{n+4}(B_n)^{-\frac{4}{n}}k^{\frac{n+4}{n}}t^{-\frac{4}{n}}
   +\frac{1}{3(n+4)\eta^2}k^{\frac{n+2}{n}}(B_n)^{-\frac{2}{n}}t^{\frac{2n-2}{n}}\\
   &+\frac{(n+2)^2}{72n(n+4)^2\eta^4}k t^4.
\endaligned
\end{equation}

Since  $\eta\geq(2\pi)^{-n}B_n^{-\frac{1}{n}}\text{vol}(\Omega)^{\frac{n+1}{n}}$, we obtain
\begin{equation}\label{eq:3.18}
\aligned
F^{'}(t)\leq&  -\dfrac{4}{n+4}(B_n)^{-\frac{4}{n}}k^{\frac{n+4}{n}}t^{-1-\frac{4}{n}}\\
&+\dfrac{2(n-1)}{3n(n+4)}k^{\frac{n+2}{n}}(2\pi)^{2n}\text{vol}(\Omega)^{-\frac{2(n+1)}{n}} t^{\frac{n-2}{n}}\\
    & +\dfrac{(n+2)^2}{18n(n+4)^2}k
    t^3(2\pi)^{4n}(B_n)^{\frac{4}{n}}\text{vol}(\Omega)^{-\frac{4(n+1)}{n}}\\
    =&\dfrac{k}{n+4}t^{-\frac{n+4}{n}}\times
    \biggl\{\frac{2(n-1)}{3n}(2\pi)^{2n}k^{\frac{2}{n}}
    \text{vol}(\Omega)^{-\frac{2(n+1)}{n}}t^{\frac{2n+2}{n}}\\
    &-4(B_n)^{-\frac{4}{n}}k^{\frac{4}{n}}
    +\dfrac{(n+2)^2}{18n(n+4)}(2\pi)^{4n}(B_n)^{\frac{4}{n}}\text{vol}(\Omega)^{-\frac{4(n+1)}{n}}t^{\frac{4n+4}{n}}
    \biggl\}.
\endaligned
\end{equation}
Hence, we have
\begin{equation}\label{eq:3.19}
\aligned
& \frac{n+4}{k}t^{\frac{n+4}{n}}F^{'}(t)\\
\leq &\frac{2(n-1)}{3n}(2\pi)^{2n}k^{\frac{2}{n}}
    \text{vol}(\Omega)^{-\frac{2(n+1)}{n}}t^{\frac{2n+2}{n}}\\
    & -4(B_n)^{-\frac{4}{n}}k^{\frac{4}{n}}
    +\frac{(n+2)^2}{18n(n+4)}(2\pi)^{4n}(B_n)^{\frac{4}{n}}\text{vol}(\Omega)^{-\frac{4(n+1)}{n}}t^{\frac{4n+4}{n}}.
\endaligned
\end{equation}
Since the right hand side of \eqref{eq:3.19}  is an increasing function of $t$,
if the right hand side of \eqref{eq:3.19} is not larger than $0$ at $t=(2\pi)^{-n}\text{vol}(\Omega)$, that
is,
\begin{equation}\label{eq:3.20}
\aligned & \frac{2(n-1)}{3n}(2\pi)^{2n}k^{\frac{2}{n}}
    \text{vol}(\Omega)^{-\frac{2(n+1)}{n}}((2\pi)^{-n}\text{vol}(\Omega))^{\frac{2n+2}{n}}\\
    &+\frac{(n+2)^2}{18n(n+4)}
(2\pi)^{4n}(B_n)^{\frac{4}{n}}\text{vol}(\Omega)^{-\frac{4(n+1)}{n}}((2\pi)^{-n}\text{vol}(\Omega))^{\frac{4n+4}{n}} -4(B_n)^{-\frac{4}{n}}k^{\frac{4}{n}}\\
\leq&0,
\endaligned
\end{equation}
then one has from \eqref{eq:3.19} that $F^{'}(t)\leq0$ on $(0,
(2\pi)^{-n}\text{vol}(\Omega)]$. If $F^{'}(t)\leq0$, then $F(t)$ is a decreasing function on $(0,
(2\pi)^{-n}\text{vol}(\Omega)]$. By a direct calculation, we have
that \eqref{eq:3.20} is equivalent to
\begin{equation}\label{eq:3.21}
\frac{(n-1)}{6n}(2\pi)^{-2}k^{\frac{2}{n}}
+\frac{(n+2)^2}{72n(n+4)}(2\pi)^{-4}(B_n)^{\frac{4}{n}}\leq (B_n)^{-\frac{4}{n}}k^{\frac{4}{n}}.
\end{equation}
We now check the equation \eqref{eq:3.21}. Note that $(2\pi)^{-2}(B_n)^{\frac{4}{n}}<1$, then one has
\begin{equation}\label{eq:3.22}
\aligned & \frac{(n-1)}{6n}(2\pi)^{-2}k^{\frac{2}{n}}+\frac{(n+2)^2}{72n(n+4)}(2\pi)^{-4}(B_n)^{\frac{4}{n}}\\
<&\frac{1}{6}(2\pi)^{-2}k^{\frac{2}{n}}+\frac{1}{36}(2\pi)^{-2}
<(2\pi)^{-2}\left\{\frac{1}{6}k^{\frac{4}{n}}+\frac{1}{36}\right\}\\
<&(2\pi)^{-2}k^{\frac{4}{n}}< (B_n)^{-\frac{4}{n}}k^{\frac{4}{n}}.
\endaligned
\end{equation}

On the other hand, since $0<g(0)\leq(2\pi)^{-n}\text{vol}(\Omega)$ and the
right hand side of the formula \eqref{eq:3.16} is $F(g(0))$, which is a
decreasing function of $g(0)$ on $(0, (2\pi)^{-n}\text{vol}(\Omega)]$, then
we can replace $g(0)$ by $(2\pi)^{-n}\text{vol}(\Omega)$ in \eqref{eq:3.16} which
gives inequality
$$
\aligned \frac{1}{k}\sum_{j=1}^k\Gamma_j\geq&
\frac{n}{n+4}\dfrac{16\pi^4}{\big(B_n\text{vol}(\Omega)\big)^{\frac{4}{n}}}k^{\frac{4}{n}}\\
 &+\frac{n+2}{12n(n+4)}\frac{\text{vol}(\Omega)}{I(\Omega)}\frac{n}{n+2}
 \dfrac{4\pi^2}{\big(B_n\text{vol}(\Omega)\big)^{\frac{2}{n}}}k^{\frac{2}{n}}\\
 &+\frac{(n+2)^2}{1152n(n+4)^2}
 \left(\frac{\text{vol}(\Omega)}{I(\Omega)}\right)^2.
\endaligned
$$
This completes the proof of Theorem 1.2.
$$\eqno{\Box}$$

\end {document}